
\documentclass[letterpaper, 10 pt, conference]{ieeeconf}  

\IEEEoverridecommandlockouts                              
\overrideIEEEmargins
\usepackage[T1]{fontenc}
\usepackage[utf8]{inputenc}
\usepackage{palatino} 
\usepackage{mathrsfs}
\usepackage{graphicx}
\usepackage{mathtools}
\usepackage{amsmath} 
\usepackage{amssymb}  
\usepackage{color}
\usepackage{enumerate}
\usepackage{multirow}
\usepackage{nomencl}
\usepackage{array,url}
\usepackage{epstopdf}
\epstopdfsetup{outdir=./}
\epstopdfsetup{outdir=./pdf_convert/}
\graphicspath{{./Fig/}}

\newtheorem{theorem}{Theorem}[section]

\newtheorem{proposition}[theorem]{Proposition}

\newtheorem{remark}{Remark}[section]
\newtheorem{assumption}{Assumption}
\newtheorem{problem}{Problem}

\newcommand{\bmat}[1]{\begin{bmatrix}#1\end{bmatrix}}
\title{\LARGE \bf
The turnpike property in the maximum hands-off control
}


\author{Noboru Sakamoto${}^{1,\ast}$ and Masaaki Nagahara${}^{2,\S}$
\thanks{1: Faculty of Science and Engineering, Nanzan University, Yamazato-cho 18, Showa-ku, Nagoya, 464-8673, Japan}%
\thanks{2:Institute of Environmental Science and Technology, The University of Kitakyushu,
Hibikino 1-1, Wakamatsu-ku, Kitakyushu, Fukuoka, 808-0135, Japan}%
\thanks{$\ast$:  Supported, in part, by JSPS KAKENHI Grant Numbers JP26289128, JP19K04446 and by Nanzan University Pache Research Subsidy I-A-2 for 2020 academic year. {\tt\footnotesize E-mail:  noboru.sakamoto@nanzan-u.ac.jp}}%
\thanks{\S: This work has been partially funded by JSPS KAKENHI Grant Number 19H02301. {\tt\footnotesize E-mail:nagahara@ieee.org}\newline 
{\hrule width 30mm}  {\normalsize Submitted for IEEE Conference on Decision and Control 2020}  
}%
}

\begin{document}
\maketitle
\thispagestyle{empty}
\pagestyle{empty}

\begin{abstract}
This paper presents analyses for the maximum hands-off control using the geometric methods developed for the theory of turnpike in optimal control. First, a sufficient condition is proved for the existence of the maximum hands-off control for linear time-invariant systems with arbitrarily fixed initial and terminal points using the relation with $L^1$ optimal control. Next, a sufficient condition is derived for the maximum hands-off control to have the turnpike property, which may be useful for approximate design of the control. 
\end{abstract}
\section{Introduction}  
Optimal control theory plays a significant role in modern control technologies and their applications to science and engineering. It provides an optimal strategy of inputs to alter dynamical systems so as for the inputs and system states to behave in an optimal way. The optimality often requires to minimize an integral of the inputs and states over the time of control process (Lagrange type). A typical form of the integral penalty (cost functional) is quadratic functions of inputs and states and design methods for this type is well-developed (see, e.g., \cite{Anderson:89:OCLQM}). 

From the viewpoint of better performance of controlled systems, non-quadratic cost functionals attract attention of theorists and practitioners in the control community. For instance, $L^1$ norm of control input is used to minimize the net amount of control effort and sometimes called the minimum fuel control problem (see, e.g., \cite{Athans:66:OC}). Recently, a control problem that maximizes the time interval over which control input is exactly zero has been proposed in \cite{Nagahara:16:ieeetac}. This problem is called maximum hands-off control and is potentially beneficial from the viewpoints of designing environmentally friendly systems \cite{Ikeda2016190,Ikeda2019_automatica,Nagahara2020_automatica}. For instance, this concept is useful and already used in electric/hybrid vehicles \cite{Chan2007704}, railway trains \cite{Liu2003917} and networked control systems \cite{Nagahara2014_ieeetac}. It is closely related to sparsity of signals, which is an active research area in system control and signal processing \cite{Donoho2006_ieeetit,Giselsson2013829}. 

In this paper, we explore further properties of the maximum hands-off control from the viewpoint of turnpike phenomenon. The turnpike phenomenon in optimal control was first observed in econometrics \cite{Dorfman:58:LPEA} and later, independently in control theory \cite{Wilde:72:ieeetac}. The turnpike theory says that the optimal control, when time-horizon is large enough, does not depend on the length of the horizon but depend only on the system and the cost functional except for thin boundary layers at the beginning and the end of the control horizon \cite{Carlson:91:IHOC,Zaslavski:06:TPCVOC}. One often encounters similar situation when traveling a long distance by a car; when the destination is far enough, "it will always pay to get on the turnpike to cover distance at the best rate of travel ..." \cite[Chapter~12]{Dorfman:58:LPEA}. The turnpike theory is recognized as useful tools to simplify the design process of optimal control \cite{Grune:13:automatica,Trelat:15:jde} and optimal shape design \cite{Porretta:16:INdAM,Lance2019_ifacnolcos}. The tool we employ in the present paper is based on invariant manifold theory in dynamical system theory such as (un)stable manifold and $\lambda$-lemma. In \cite{sakamoto:19:cdc}, they are applied to Hamiltonian systems derived from necessary condition of optimality in order to better understand the geometric nature of the turnpike and to give occurrence conditions for turnpike in terms of the locations of (un)stable manifolds. In the present paper, we consider optimal control problems where initial and terminal states are arbitrarily fixed and show that the turnpike phenomenon is observed in the maximum hands-off control under certain conditions, which can be used to simplify the construction of the control.  

The organization of the paper is as follows. In \S~\ref{sctn:L1}, a sufficient condition for the existence of $L^1$ optimal control is provided using the direct method of calculus of variations (see, e.g., \cite{Peypouquet:15:CONS}), which is a generalization of the result in \cite{Hajek1979jota}. \S~\ref{sctn:ex_MHO} shows that under the strong form of controllability condition and the conditions on initial and terminal states, the maximum hands-off control exists. In \S~\ref{sctn:HHO_TP}, it is shown that the turnpike phenomenon can be seen in the process of the maximum hands-off control. A simulation result is illustrated in \S~\ref{sctn:simulation}. 
\section{Existence of $L^1$ optimal control}\label{sctn:L1} Let us consider an $n$-dimensional linear time-invariant system with $m$ inputs $u=(u^1,\ldots,u^m)^\top$
\begin{equation}
    \dot x = Ax +Bu,\ x(0)=x_0.\label{eqn:lti}
\end{equation}
Let $T$ be a given positive constant. For the optimal control problem defined below, the control set for (\ref{eqn:lti}) is taken as the Banach space $L^1((0,T);\mathbb{R}^m)$, or $L^1$ in short, the set of $\mathbb{R}^m$-valued measurable functions over $[0,T]\subset\mathbb{R}$ with $\int_0^T|u(t)|\,dt<\infty$. Also we introduce Banach spaces $L^2((0,T);\mathbb{R}^m)$ and $L^\infty((0,T);\mathbb{R}^m)$ or, $L^2$ and $L^\infty$, by the sets of $\mathbb{R}^m$-valued measurable functions with 
\[
\int_0^T|u(t)|^2\,dt \quad \text{and}\quad \max_{1\leqslant i\leqslant m}\underset{[0,T]}{\mathrm{ess\, sup}}|u^i(t)|
\]
being finite, respectively. The norms for $L^1$, $L^2$ and $L^\infty$ will be denoted by $\|\cdot\|_{L^1}$, $\|\cdot\|_{L^2}$ and $\|\cdot\|_{L^\infty}$, respectively. Finally, let us denote the closed unit ball in $L^\infty$ by $B_\infty$. 

The cost functional to be considered in this section is
\begin{equation}
J[u] = \int_0^T x(t)^\top Q x(t)+|u(t)|\,dt,\label{eqn:cost_L1}
\end{equation}
where $Q$ is a real nonnegative definite matrix. 
\begin{problem}\label{pbm:l1_optimal}
Given $T>0$ and $x_0$, $x_f\in\mathbb{R}^n$ for system (\ref{eqn:lti}), find a control $u$ that minimizes $J[u]$ over all control inputs in $L^1 \cap B_\infty$ that take the initial state $x_0$ to $x_f$ at $t=T$. 
\end{problem}
 
 Note that for finite $T$, we have 
 \begin{equation}
 L^\infty((0,T);\mathbb{R}^m)\subset L^2((0,T);\mathbb{R}^m)\subset L^1((0,T);\mathbb{R}^m)\label{eqn:L-inclusions}
 \end{equation}
 and therefore, for Problem~\ref{pbm:l1_optimal}, it suffices to look for controls in $B_\infty$. 
 
 Now, the main result of this section is stated as follows. 
 \begin{theorem}\label{thm:L1_existence}
 Suppose that there is a control $u\in B_\infty$ for (\ref{eqn:lti}) taking its initial state $x_0$ at $t=0$ to $x_f$ at $t=T$. Then, there exists an optimal control for Problem~\ref{pbm:l1_optimal}.
 \end{theorem}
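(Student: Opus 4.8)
The plan is to use the direct method of the calculus of variations. First I would note that the admissible set --- controls $u\in L^1\cap B_\infty$ steering $x_0$ to $x_f$ at $t=T$ --- is nonempty by hypothesis, and that $J[u]\geqslant 0$ for every admissible $u$, so the infimum $J^\ast:=\inf J$ is a finite nonnegative number. I would then fix a minimizing sequence $\{u_k\}\subset B_\infty$, each $u_k$ meeting the endpoint constraint, with $J[u_k]\to J^\ast$.

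The second step is compactness. Since $L^\infty((0,T);\mathbb{R}^m)$ is the dual of the separable space $L^1((0,T);\mathbb{R}^m)$, the closed unit ball $B_\infty$ is weak-$*$ sequentially compact (Banach--Alaoglu together with separability of the predual), and it is weak-$*$ closed. Hence, after passing to a subsequence, $u_k\rightharpoonup u^\ast$ weak-$*$ for some $u^\ast\in B_\infty$. I would next check that $u^\ast$ is admissible: writing the state through the variation-of-constants formula
\[
x_k(T)=e^{AT}x_0+\int_0^T e^{A(T-s)}B\,u_k(s)\,ds ,
\]
and observing that $s\mapsto e^{A(T-s)}B$ is continuous on $[0,T]$, hence lies in $L^1$, weak-$*$ convergence forces $x_k(T)\to x^\ast(T)$. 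Since $x_k(T)=x_f$ for all $k$, the limit satisfies $x^\ast(T)=x_f$, so $u^\ast$ belongs to the admissible set.

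The final step is lower semicontinuity of $J$ along the subsequence, which I would handle by splitting the cost into its state part and its control part. For the state part, the same argument with $s\mapsto e^{A(t-s)}B\in L^1$ for each fixed $t$ gives $x_k(t)\to x^\ast(t)$ pointwise; as all $u_k\in B_\infty$, the trajectories $x_k$ are uniformly bounded on $[0,T]$, and dominated convergence yields $\int_0^T x_k^\top Q x_k\,dt\to\int_0^T (x^\ast)^\top Q x^\ast\,dt$. The control part is the crux: I claim $u\mapsto\|u\|_{L^1}$ is weak-$*$ lower semicontinuous, which follows from the representation
\[
\|u\|_{L^1}=\sup\Big\{\textstyle\int_0^T \langle\phi(t),u(t)\rangle\,dt : \phi\in L^\infty,\ |\phi(t)|\leqslant 1 \text{ a.e.}\Big\},
\]
in which each admissible $\phi$ lies in $L^1$ (finite horizon), so each functional inside the supremum is weak-$*$ continuous, and a pointwise supremum of weak-$*$ continuous functionals is weak-$*$ lower semicontinuous. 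Combining the two parts, and using that the convergent state term does not spoil the liminf, gives $J[u^\ast]\leqslant\liminf_k J[u_k]=J^\ast$; since $u^\ast$ is admissible we also have $J[u^\ast]\geqslant J^\ast$, whence $u^\ast$ is optimal.

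The step I expect to demand the most care is the weak-$*$ lower semicontinuity of the $L^1$ norm, because the $L^1$ norm is not the norm dual to the weak-$*$ topology, and one must rule out loss of mass in passing to the limit; the dual representation displayed above is exactly what prevents this.
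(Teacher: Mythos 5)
Your proof is correct, and although it follows the same direct-method skeleton as the paper (minimizing sequence, weak compactness, lower semicontinuity), each of the three technical steps is carried out with a genuinely different tool. For compactness, the paper embeds $B_\infty$ into $L^2$ via the finite-horizon inclusion $L^\infty\subset L^2$, extracts a weakly convergent subsequence by reflexivity, and must then verify separately that the weak limit remains in $B_\infty$, which it does by testing against characteristic functions of Borel sets $D\subset[0,T]$ and letting $D$ vary; your weak-$*$ argument in $L^\infty=(L^1)^*$ with separable predual delivers $u^\ast\in B_\infty$ for free from Banach--Alaoglu (the two routes are compatible: on a finite horizon, weak-$*$ convergence against $L^1$ implies weak convergence against $L^2$). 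For the endpoint constraint, the paper does not argue through the Duhamel formula at all: it adjoins the lower semicontinuous indicator function of $\{x_f\}$ to the cost and proves uniform convergence of the trajectories $x_k\to\bar{x}$ via uniform boundedness, equicontinuity, and the Ascoli--Arzel\'a theorem; your passage to the limit in the variation-of-constants formula, pairing $u_k$ against the continuous kernel $s\mapsto e^{A(T-s)}B\in L^1$, is shorter and also replaces the paper's Ascoli--Arzel\'a step in the state-cost convergence, but it leans on linearity of the dynamics, whereas the paper's trajectory-compactness argument would survive a nonlinear vector field. For lower semicontinuity of the $L^1$ norm, the two arguments are the same idea seen from opposite sides: the paper selects a single Hahn--Banach norming functional $l$ with $\langle l,\bar{u}\rangle=\|\bar{u}\|_{L^1}$ and $\langle l,u\rangle\leqslant\|u\|_{L^1}$, while you take the supremum over all unit-ball test functions $\phi\in L^\infty\subset L^1$, under which weak-$*$ lower semicontinuity is immediate; your closing worry about loss of mass is legitimate but is resolved exactly by your observation that every such $\phi$ lies in the predual on a finite horizon.
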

\begin{proof}
Let $\varphi:\mathbb{R}^n\to\mathbb{R}\cup\{+\infty\}$ be the indicator function for $\{x_f\}$, namely, $\varphi(x_f)=0$ and $\varphi(x)=+\infty$ for $x\neq x_f$. Note that $\varphi$ is lower semi-continuous since $\{x_f\}$ is a closed set. Define a modified cost functional 
\[
\bar{J}[u] = \varphi(x(T))+\int_0^T x(t)^\top Q x(t)+|u(t)|\,dt.
\]
Considering $J[u]$ for controls that take the initial state to $x=0$ is equivalent to minimizing $\bar{J}[u]$ without the constraint of $x(T)=x_f$. We shall show that there exists a $\bar{u}\in B_\infty$ such that 
\begin{equation}
\bar{J}[\bar u] = \inf_{u\in B_\infty} \bar{J}[u].\label{eqn:end_of_pf}
\end{equation}
From the hypothesis there exists a sequence of controls $\{u_k\}\subset B_\infty$ such that $\bar{J}[u_k]<\infty$ and $\lim_{k\to\infty}\bar{J}[u_k]=\inf_{B_\infty} \bar{J}$. \\
(Step 1) We prove that up to subsequence, $\{u_k\}$ weakly converges to a $\bar u\in B_\infty$. From (\ref{eqn:L-inclusions}), $\{u_k\}$ is a bounded sequence in $L^2$ and therefore, up to subsequence, $\{u_k\}$ weakly converges to a $\bar u\in L^2$. From the weak convergence, for any Borel set $D\subset [0,T]\subset\mathbb{R}$, 
\[
\int_0^T\chi_D(t)u_k^i(t)\,dt\to \int_0^T\chi_D(t)\bar{u}^i(t)\,dt,\ i=1,\ldots,m
\]
as $k\to\infty$, where $\chi_D$ is the characteristic function for $D$. However, since $u_k\in B_\infty$, one obtains $\left| \int_Du_k^i(t)\,dt \right|\leqslant\mu(D)$ for all $k\in\mathbb{N}$, $i=1,\ldots,m$, where $\mu$ is the Lebesgue measure. Taking limit $k\to\infty$ yields $\left| \int_D\bar{u}^i(t)\,dt \right|\leqslant\mu(D)$, which shows that
\[
|\bar{u}^i(t)|\leqslant1 \quad \text{a.e.}\quad t\in[0,T], \ i=1,\ldots,m
\]
since $D\subset[0,T]$ is arbitrary. \\
(Step 2) Let $x_k$ be the solution of (\ref{eqn:lti}) corresponding to $u_k$. Then, it can be shown that $\{x_k\}$ is uniformly bounded and equicontinuous. Let $\bar x$ be the solution of (\ref{eqn:lti}) for $\bar u$. Up to subsequence, using Ascoli-Arzel\'a Theorem, $\{x_k\}$ uniformly converges to $\bar x$. The detail of this step is omitted. \\
(Step 3) We show that, up to subsequence, 
\begin{equation}
\|\bar u\|_{L^1}\leqslant \liminf_{k\to\infty}\|u_k\|_{L^1}\label{ineq:liminf}
\end{equation}
and (\ref{eqn:end_of_pf}) holds. Note that $\|u_k\|_{L^1}$ is bounded since $u_k\in B_\infty$. From Hahn-Banach Theorem, there is a bounded linear functional $l:L^1\to\mathbb{R}$ such that $\langle l, \bar{u} \rangle=\|\bar u\|_{L^1}$ and $\langle l,u\rangle\leqslant\|u\|_{L^1}$ for $u\in L^1$. It then holds that 
\begin{align*}
    \|\bar u\|_{L^1}&= \langle l,\bar{u}-u_k\rangle +\langle l,u_k\rangle\\
        &\leqslant \langle l, \bar{u}-u_k\rangle +\|u_k\|_{L^1},
\end{align*}
which yields (\ref{ineq:liminf}) from the weak convergence of $\{u_k\}$ to $\bar{u}$. So far, we have shown that 
\begin{gather*}
    \varphi(\bar{x}(T))\leqslant\liminf_{k\to\infty}\varphi(x_k(T))\\
    \int_0^Tx_k(t)^\top Qx_k(t)\,dt\to\int_0^T\bar{x}(t)^\top Q\bar{x}(t)\,dt,\ k\to \infty.
\end{gather*}
It hence follows that 
\begin{align*}
    \bar{J}[\bar u] &=\varphi(\bar{x}(T))+\int_0^T\bar{x}(t)^\top Q\bar{x}(t)+|u(t)|\,dt\\
    &\leqslant \liminf_{k\to\infty}\bar{J}[u_k]=\inf_{u\in B_\infty}\bar{J}[u],
\end{align*}
which completes the proof.
\end{proof}
\section{A sufficient condition for the existence of maximum hands-off control}\label{sctn:ex_MHO}
Based on the result in the previous section on $L^1$ optimal control, this section considers the maximum hands-off control or optimal sparse control, which is defined as follows. 
\begin{problem}[Maximum hands-off control]\label{pbm:mhandsoff}
Let us consider system (\ref{eqn:lti}). For given $T>0$, $x_0$ and $x_f\in\mathbb{R}^n$, find a control $u$ that minimizes $\mu(\mathrm{supp}(u))$ over all control inputs in $B_\infty$ that take the initial state $x_0$ at $t=0$ to $x_f$ at $t=T$.  
\end{problem}

In \cite[Theorem 8]{Nagahara:16:ieeetac}, it is shown that $L^1$ optimization can be used for maximum hands-off solution under normality condition, a sufficient condition for which is explicitly obtained for (\ref{eqn:lti}) in \cite{Athans:66:OC}. Roughly speaking, system (\ref{eqn:lti}) is called normal if its $L^1$ optimal control takes values $\pm1$or 0 for almost all $t\in[0,T]$.  
\begin{assumption}[A sufficient condition for normality]\label{assm:normal}
For (\ref{eqn:lti}), all the pairs $(A,b_j)$, $j=1,\ldots,m$, are controllable and $A$ is nonsingular. 
\end{assumption}

%
Additionally, let us introduce notations to specify spectral condition of system (\ref{eqn:lti}). For a matrix $A\in\mathbb{R}^{n\times n}$, let $\mathscr{L}^{+}(A)$ ($\mathscr{L}^{-}(A)$) denote the generalized eigenspace for the eigenvalues of $A$ in the open left-half (right-half) plain in $\mathbb{C}$ and let $\mathscr{L}^{0+}(A)= \mathscr{L}^{+}(A)\oplus\mathscr{L}^{0}(A)$, $\mathscr{L}^{0-}(A)= \mathscr{L}^{-}(A)\oplus\mathscr{L}^{0}(A)$, where $\mathscr{L}^{0}(A)$ is the generalized eigenspace for the eigenvalues on the imaginary axis. 
\begin{theorem}\label{thm:existence_sparse_opt} Suppose that Assumption~\ref{assm:normal} holds. If $x_0\in \mathscr{L}^{0-}(A)$ and $x_f\in \mathscr{L}^{0+}(A)$, then for sufficiently large $T$, a solution for Problem~\ref{pbm:mhandsoff} exists.  
\end{theorem}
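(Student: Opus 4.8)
The plan is to reduce the existence of a maximum hands-off control to the existence of an $L^1$ optimal control already established in Theorem~\ref{thm:L1_existence}, and then to pass through normality. Concretely I would proceed in three stages: (i) show that for every sufficiently large $T$ there is at least one admissible control $u\in B_\infty$ steering $x_0$ to $x_f$; (ii) apply Theorem~\ref{thm:L1_existence} with $Q=0$, so that the cost is the pure $L^1$ norm $\int_0^T|u(t)|\,dt$, to obtain an $L^1$ optimal control $u^\ast\in B_\infty$ for this transfer; and (iii) use Assumption~\ref{assm:normal} together with \cite[Theorem~8]{Nagahara:16:ieeetac} to conclude that, under normality, the $L^1$ optimal $u^\ast$ is bang--off--bang and simultaneously minimizes $\mu(\mathrm{supp}(u))$, i.e.\ it solves Problem~\ref{pbm:mhandsoff}. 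Stages (ii) and (iii) are essentially invocations of existing results; the real content is in stage (i).

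For stage (i) I would split the transfer into a null-control phase followed by a reachability phase. Because $x_0\in\mathscr{L}^{0-}(A)$, I claim there are $T_1>0$ and $u_1\in B_\infty$ driving $x_0$ to the origin in time $T_1$; because $x_f\in\mathscr{L}^{0+}(A)$, there are $T_2>0$ and $u_2\in B_\infty$ driving the origin to $x_f$ in time $T_2$. The second claim is the time-reversed dual of the first: under $t\mapsto -t$ the system becomes $\dot x=-Ax-Bu$, whose center--stable subspace is $\mathscr{L}^{0-}(-A)=\mathscr{L}^{0+}(A)$, so reachability into $\mathscr{L}^{0+}(A)$ for $(A,B)$ is null-controllability from $\mathscr{L}^{0-}(-A)$ for the reversed system. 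Granting both claims, concatenation gives feasibility for \emph{every} $T\ge T_1+T_2$: on $[0,T-T_2]$ steer $x_0$ to $0$ and hold at the equilibrium $0$ (this is legitimate since $R_{T_1}\subseteq R_T$ for $T\ge T_1$, where $R_T:=\{\int_0^T e^{-As}Bu\,ds:u\in B_\infty\}$, and $0$ is an equilibrium so padding the control by zero keeps the state at $0$), and on $[T-T_2,T]$ steer $0$ to $x_f$. This yields an admissible $u\in B_\infty$ for all large $T$, matching the ``sufficiently large $T$'' in the statement.

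The heart of the argument, and the step I expect to be the \emph{main obstacle}, is the null-controllability claim: $(A,B)$ controllable (which is implied by Assumption~\ref{assm:normal}) and $x_0\in\mathscr{L}^{0-}(A)$ imply $-x_0\in R_T$ for large $T$, since $x(T)=0$ is equivalent to $\int_0^T e^{-As}Bu\,ds=-x_0$. I would argue through the support function of the compact convex symmetric set $R_T$, namely $h_{R_T}(\eta)=\int_0^T\sum_{j=1}^m\bigl|b_j^\top e^{-A^\top s}\eta\bigr|\,ds$, so that $-x_0\in R_T$ is equivalent to $|\langle\eta,x_0\rangle|\le h_{R_T}(\eta)$ for all $\eta\in\mathbb{R}^n$. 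Decomposing $\eta$ along the $A^\top$-invariant splitting $\mathbb{R}^n=\mathscr{L}^{0-}(A^\top)\oplus\mathscr{L}^{+}(A^\top)$, and using that $\mathscr{L}^{+}(A^\top)$ is the annihilator of $\mathscr{L}^{0-}(A)$, the functional $\eta\mapsto\langle\eta,x_0\rangle$ vanishes on the $\mathscr{L}^{+}(A^\top)$ component; on the $\mathscr{L}^{0-}(A^\top)$ component the integrand does not decay (its modes satisfy $\mathrm{Re}(-\lambda)\ge 0$) and is not identically zero by controllability, so $h_{R_T}$ grows without bound in those directions as $T\to\infty$, whereas the $\mathscr{L}^{+}(A^\top)$ component contributes only a $T$-uniform bounded amount because $e^{-A^\top s}$ decays there. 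The delicate point is to upgrade this directional dichotomy into a single estimate valid uniformly over the unit sphere, placing $-x_0$ in the interior of $R_T$ for all large $T$; the hardest case is the purely imaginary (center) modes $\mathscr{L}^{0}(A)$, where the growth of $h_{R_T}$ is only linear in $T$, and it is precisely here that controllability of $(A,B)$ — ensuring the non-degeneracy of $B^\top e^{-A^\top s}\eta$ along these modes — is essential. Once stage (i) is secured in this way, stages (ii) and (iii) close the proof.
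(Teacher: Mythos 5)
Your proposal is correct, and its skeleton --- feasibility of the transfer for all large $T$, then Theorem~\ref{thm:L1_existence} with $Q=0$ to get an $L^1$ optimal control, then Assumption~\ref{assm:normal} (normality, bang-off-bang via Athans--Falb) plus \cite[Theorem~8]{Nagahara:16:ieeetac} to convert it into a maximum hands-off control --- is exactly the paper's. Where you genuinely diverge is in the feasibility step, which is indeed where the content lies. The paper argues dynamically: controllability of $(A,B)$ yields $r>0$ and $t_0>0$ such that every point with $|x|<r$ can be steered to the origin within time $t_0$ by a control with $|u|\leqslant1$; the hypothesis $x_0\in\mathscr{L}^{0-}(A)$ is then used (asserted, without detailed proof) to steer $x_0$ into that ball within some finite time $t_1$; the terminal condition is handled by the same argument applied to the time-reversed system $\dot x=-Ax$ (the exact counterpart of your duality claim), and concatenation gives admissibility for $T>2(t_0+t_1)$, matching your padding-by-zero at the equilibrium. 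You instead prove bounded-control null controllability of $\mathscr{L}^{0-}(A)$ directly by convex duality, characterizing $-x_0\in R_T$ through the support function $h_{R_T}$ and the splitting $\mathbb{R}^n=\mathscr{L}^{0-}(A^\top)\oplus\mathscr{L}^{+}(A^\top)$. This is more quantitative and in fact supplies a proof of precisely the step the paper leaves implicit; note that the paper's ``enter the ball $|x|\leqslant r$'' claim is itself nontrivial for center modes (a large-amplitude harmonic oscillator never approaches the ball under $u\equiv 0$), so both arguments ultimately rest on the same classical fact about global null controllability with bounded controls under a closed-left-half-plane spectral condition, which the paper invokes and you derive. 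The uniformity issue you flag is real but closable with the ingredients you already name: $h_{R_T}(\eta)$ is nondecreasing in $T$ and continuous in $\eta$, so its divergence is uniform on the compact unit sphere of $\mathscr{L}^{0-}(A^\top)$ (a Dini-type argument), giving $h_{R_T}(\eta_1)\geqslant\kappa_T|\eta_1|$ with $\kappa_T\to\infty$; combining this with the $T$-uniform bound $h_{R_T}(\eta_2)\leqslant C|\eta_2|$ on the decaying component and the fixed-horizon controllability bound $h_{R_{T_0}}(\eta)\geqslant c_0|\eta|$, a two-case analysis (according to whether $C|\eta_2|\leqslant\kappa_T|\eta_1|/2$ or not) yields $|\langle\eta,x_0\rangle|\leqslant h_{R_T}(\eta)$ for all $\eta$ once $\kappa_T$ exceeds a constant depending on $|x_0|$. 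In short: the paper's route is shorter because it borrows the hard fact, while yours buys a self-contained and sharper feasibility argument at the cost of this compactness bookkeeping; stages (ii) and (iii) coincide with the paper's.
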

\begin{proof}
We show that an $L^1$ optimal control exists for Problem~\ref{pbm:l1_optimal} with $Q=0$. From the controllability of $(A,B)$, there exist $r>0$ and $t_0>0$ such that for all points in $|x|<r$ there exist controls $u(t)$ that take them to the origin within $[0,t_0]$ and satisfy $|u(t)|\leqslant1$ for $t\in[0,t_0]$. Also from $x_0\in \mathscr{L}^{0-}(A)$, there exists an input with $|u(t)|\leqslant1$ such that the corresponding state starting at $x_0$ at $t=0$ enters $|x|\leqslant r$ within a finite time, say, $t_1>0$. By considering $\dot x = -Ax$ and using the condition $x_f\in \mathscr{L}^{0+}(A)$, it is shown that for $x_0$, $x_f$, there exists a control $u(t)$ with $|u(t)|\leqslant1$ that takes $x_0$ to $x_f$ at $t=T$ if $T>2(t_0+t_1)$. Then, Theorem~\ref{thm:L1_existence} applies to show that there exists a solution to Problem~\ref{pbm:l1_optimal}. 
Now we use the results in \cite[Chapter~6]{Athans:66:OC} with the normality condition in Assumption~\ref{assm:normal} to conclude that this $L^1$ optimal control is bang-off-bang, namely, it takes only three values of $\pm1$ and $0$ almost everywhere. 
Therefore, from Theorem~8 in \cite{Nagahara:16:ieeetac}, this $L^1$ optimal control is actually a maximum hands-off control. 
\end{proof}

\section{Turnpike phenomenon in maximum hands-off control}\label{sctn:HHO_TP} 
As we have seen in the previous section, using $L^1$ optimal control theory, it is possible to provide a condition for the existence of maximum hands-off control. Theorem~\ref{thm:existence_sparse_opt}, however, provides little information on how to construct it. The present section shows that under certain conditions, the optimal control exhibits the turnpike phenomenon, from which one often deduces approximate designs. 

Here we provide only a basic definition and facts on turnpike property. For more detail, we refer to \cite{Carlson:91:IHOC,Zaslavski:06:TPCVOC}. 
The pair of the optimal control $u_T$ and corresponding states $x_T$ for (\ref{eqn:lti})-(\ref{eqn:cost_L1}) is said to have the turnpike property if for any $\varepsilon>0$, there exists an $\eta_\varepsilon>0$ such that 
\[
\mu( \left\{t\geqslant0\,|\,|u_T (t)|+|x_T(t,x_0)|>\varepsilon  \right\} ) <\eta_\varepsilon
\]
for all $T>0$, where $\eta_\varepsilon$ depends only on $\varepsilon$, $A$, $B$, $x_0$, and $Q$. In \cite{Porretta:13:sicon}, the turnpike inequality condition is proposed which requires for any $T>0$, $u_T$ and $x_T$ to satisfy 
\[
|u_T (t)|+|x_T(t,x_0)|\leqslant 
    K\left[e^{-a t}+e^{-a(T-t)}\right]
\]
for all $t\in[0,T]$ and some constants $K>0$, $a>0$ which are independent of $T$. The turnpike inequality condition is known to be sufficient for the turnpike property. 
\subsection{Review of geometric turnpike analysis via invariant manifold theory}
This subsection summarizes the geometric framework in \cite{sakamoto:19:cdc} which will be useful for $L^1$ optimal control analysis and subsequently for maximum hands-off control. Let us consider a nonlinear dynamical system of the form
\begin{equation}
    \dot z= f(z),\label{eqn:dyn_sys}
\end{equation}
where $f:\mathbb{R}^N\to\mathbb{R}^N$ is a class of functions satisfying the following assumptions.
\begin{assumption}\label{assm:hyp}
\begin{enumerate}[(i)]
    \item $f(0)=0$.
    \item $f$ is locally $C^1$ class around $z=0$.
    \item $f$ is hyperbolic at $z=0$, namely, $(\partial f/\partial z)(0)\in\mathbb{R}^{N\times N}$ has $k$ eigenvalues with strictly negative real parts and $N-k$ eigenvalues with strictly positive real parts.
    \item (\ref{eqn:dyn_sys}) admits unique Carath\'eodory solutions for all initial conditions (see, e.g., \cite[Section~I.5]{Hale:73:ODE}).
\end{enumerate}
\end{assumption}
 It is known, as {\em the stable manifold theorem}
, that there exist continuous manifolds $S$ and $U$, called {\em stable manifold} and {\em unstable manifold} of (\ref{eqn:dyn_sys}) at $0$, respectively, defined by 
\begin{align*}
S&:=\{z\in\mathbb{R}^N\,|\, \varphi(t,z)\to0 \text{ as } t\to \infty\},\\
U&:=\{z\in\mathbb{R}^N\,|\, \varphi(t,z)\to0 \text{ as } t\to -\infty\},
\end{align*}
where $\varphi(t,z)$ is the solution of (\ref{eqn:dyn_sys}) starting $z$ at $t=0$. It is known that $S$, $U$ are invariant under the flow of $f$. It holds that 
\begin{subequations}\label{ineq:flows_on_u_stablemani}
\begin{align}
    |\varphi(t,z_0)| & \leqslant K e^{-a t} \text{ for } t\geqslant0\\
    |\varphi(t,z_1)| & \leqslant K e^{a t} \text{ for } t\leqslant0, 
\end{align}
\end{subequations}
where $K>0$ is a constant dependent on $z_0$ and $z_1$ and $a>0$ is a constant independent of $z_0$ and $z_1$. See, e.g., \cite{Hale:73:ODE,Palis:82:GTDS} for more detail on the theory of stable manifold. Next Proposition, which is taken from \cite[Proposition~2.2]{sakamoto:19:cdc} and proved using the $\lambda$-lemma (see, e.g., \cite{Palis:82:GTDS}), describes more detailed behaviors of solutions near the stable and unstable manifolds. 
\begin{proposition}\label{prop:pre-turnpike} Suppose that $f$ satisfies Assumption~\ref{assm:hyp} and take $K$, $a$ in (\ref{ineq:flows_on_u_stablemani}). Then the following hold. 
\begin{enumerate}[(i)]
    \item There exists a $T_0>0$ such that for every $T>T_0$ there exists a $\rho>0$ such that 
    \[
    |\varphi(t,y)|\leqslant Ke^{-a t} \text{ for } t\in[0,T],\ y\in B(z_0,\rho), 
    \]
    where $B(x_0,\rho)$ is the $N$-dimensional ball centered at $z_0$ with radius $\rho$. Moreover, $\rho\to0$ when $T\to\infty$.
\item There exist a $T_0<0$ such that for every $T<T_0$ there exists a $\rho>0$ such that 
    \[
    |\varphi(t,y)|\leqslant Ke^{a t} \text{ for } t\in[T,0],\ y\in B(z_1,\rho), 
    \]
Moreover, $\rho\to0$ when $T\to -\infty$.
\item For any $(N-k)$-dimensional disc $\bar D$
transversal to $S$ at $z_0$ and any $k$-dimensional disc $\bar E$ transversal to $U$ at $z_1$, there exists a $T_0>0$ such that for any $T>T_0$ there exist an $(n-k)$-dimensional disc $D\subset \bar D$ transversal to $S$ at $z_0$ and a $k$-dimensional disc $E\subset \bar E$ transversal to $U$ at $z_1$ such that $\varphi(T,D)$ intersects $\varphi(-T,E)$ at a single point. 
\end{enumerate}
\end{proposition}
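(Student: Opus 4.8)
The plan is to treat (i) and (ii) as mirror images of one another and to extract (iii) directly from the $\lambda$-lemma. First I would note that (ii) is nothing but (i) applied to the time-reversed system $\dot z = -f(z)$: this flow is again hyperbolic at $0$, it interchanges the stable and unstable manifolds $S$ and $U$, and it turns the first estimate in (\ref{ineq:flows_on_u_stablemani}) into the second. Hence it suffices to prove (i) and (iii).

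For (i), the point $z_0$ lies on $S$, so by (\ref{ineq:flows_on_u_stablemani}) the reference trajectory satisfies $|\varphi(t,z_0)| \leqslant Ke^{-at}$ and tends to $0$. I would fix a small neighborhood $V$ of $0$ on which the hyperbolic splitting holds, choose a $t^\ast$ for which $\varphi(t^\ast,z_0)$ sits deep inside $V$, and split $[0,T]$ at $t^\ast$. On the compact piece $[0,t^\ast]$ continuous dependence on initial data keeps $\varphi(t,y)$ inside the envelope $Ke^{-at}$ once $\rho$ is small. On $[t^\ast,T]$ the perturbed trajectory enters $V$ at distance $O(\rho)$ from the local stable manifold; the local estimates (contraction along $S$, expansion along $U$) then show that its stable part keeps decaying while its unstable part, of size $O(\rho)$, grows only like $e^{at}$ and therefore remains below the decay envelope until a time of order $a^{-1}\log(1/\rho)$. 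Requiring this time to exceed $T$ fixes the trade-off $\rho = \rho(T)$ and forces $\rho \to 0$ as $T\to\infty$, since no fixed ball can serve for every $T$ (points off $S$ eventually escape along $U$). The one delicate bookkeeping step is keeping the \emph{same} constant $K$; this is handled by exploiting the multiplicative margin in the stable-manifold estimate to absorb the $O(\rho)$ unstable contribution.

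For (iii) I would invoke the $\lambda$-lemma directly. The disc $\bar D$ has the unstable dimension $N-k$ and is transversal to $S$ at $z_0\in S$, so the inclination lemma gives, for large $T$, a subdisc $D\subset\bar D$ whose image $\varphi(T,D)$ is $C^1$-close to $U$ near $0$. Applying the same lemma to the time-reversed flow, the $k$-dimensional disc $\bar E$ transversal to $U$ at $z_1\in U$ yields a subdisc $E\subset\bar E$ with $\varphi(-T,E)$ being $C^1$-close to $S$ near $0$. Since $S$ and $U$ meet transversally at the fixed point $0$ with complementary dimensions $k$ and $N-k$, their intersection near $0$ is the single point $0$; a $C^1$-small perturbation of a transversal, complementary-dimensional pair still meets transversally in exactly one nearby point. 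Reading $\varphi(T,D)$ and $\varphi(-T,E)$ as such perturbations of $U$ and $S$ delivers the required single intersection point.

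I expect the main obstacle to be (iii): making the two heuristics ``$\varphi(T,D)$ is like $U$'' and ``$\varphi(-T,E)$ is like $S$'' precise enough that the transversal-intersection count (\emph{exactly} one point) survives. This is where the $C^1$ convergence furnished by the $\lambda$-lemma, rather than mere $C^0$ closeness, is indispensable. In (i) and (ii) the only genuine care lies in the uniform tracking of $K$ and the verification that $\rho(T)\to0$, both of which become routine once the local hyperbolic normal form near $0$ is in place.
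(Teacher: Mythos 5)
Your proposal is correct and takes essentially the same approach as the paper: the paper does not prove Proposition~\ref{prop:pre-turnpike} in-text but imports it from \cite[Proposition~2.2]{sakamoto:19:cdc}, where (i)--(ii) rest on the local hyperbolic estimates plus continuous dependence (time reversal giving (ii)), and (iii) is exactly your $\lambda$-lemma argument producing subdiscs whose forward/backward images are $C^1$-close to $U$ and $S$ and hence meet transversally in a single point. The only delicate points are the ones you already flag --- keeping the same constant $K$ uniformly and using $C^1$ rather than mere $C^0$ closeness in (iii).
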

The above Proposition is used to prove the following result which shows that turnpike-like behaviors can be observed in a general hyperbolic dynamical systems. 
\begin{theorem}\label{thm:turnpk_in_dyn}
Suppose that $f$ satisfies Assumption~\ref{assm:hyp}. Then, for any $z_0\in S$, any $z_1\in U$, any $(N-k)$-dimensional disc $\bar D$
transversal to $S$ at $z_0$ and any $k$-dimensional disc $\bar E$ transversal to $U$ at $z_1$, there exists a $T_0>0$ such that for every $T>T_0$ there exist $\rho>0$, $y_0\in B(z_0,\rho)\cap \bar D$ and $y_1\in B(z_1,\rho)\cap \bar E$ such that $\varphi(T,y_0)=y_1$ and 
\[
|\varphi(t,y_0)|\leqslant K\left[e^{-a t}+e^{-a(T-t)}\right] \text{ for } t\in [0,T].
\]
Moreover, $\rho\to0$ when $T\to\infty$. 
\end{theorem}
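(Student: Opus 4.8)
The plan is to build the connecting orbit from Proposition~\ref{prop:pre-turnpike}(iii) and then obtain the two-sided bound by patching the one-sided decay estimates of parts (i) and (ii) at the midpoint of the horizon. Throughout I would use the flow (group) property $\varphi(s+t,z)=\varphi(s,\varphi(t,z))$, which is available because Assumption~\ref{assm:hyp}(iv) guarantees unique solutions for all initial conditions.

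First I would set $\tau=T/2$ and apply part (iii) with time parameter $\tau$ in place of $T$. This produces subdiscs $D\subset\bar D$, $E\subset\bar E$ and a single intersection point $p\in\varphi(\tau,D)\cap\varphi(-\tau,E)$. Writing $p=\varphi(\tau,y_0)$ with $y_0\in D$ and $p=\varphi(-\tau,y_1)$ with $y_1\in E$, the group property gives
\[
\varphi(T,y_0)=\varphi(\tau,\varphi(\tau,y_0))=\varphi(\tau,p)=\varphi(\tau,\varphi(-\tau,y_1))=y_1,
\]
so $y_0\in\bar D$ and $y_1\in\bar E$ are joined by an orbit in time $T$, with midpoint $\varphi(\tau,y_0)=p$ lying near the origin for large $T$. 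Next I would install the estimate. On $[0,\tau]$ I invoke part (i) at horizon $\tau$ to get $|\varphi(t,y_0)|\le Ke^{-at}$, provided $y_0$ lies in the ball $B(z_0,\rho_s)$ furnished there. On $[\tau,T]$ I use part (ii) at horizon $-\tau$ applied to the endpoint $y_1$: for $s\in[-\tau,0]$ one has $|\varphi(s,y_1)|\le Ke^{as}$ whenever $y_1\in B(z_1,\rho_u)$, and substituting $s=t-T$ together with $\varphi(t-T,y_1)=\varphi(t,y_0)$ turns this into $|\varphi(t,y_0)|\le Ke^{-a(T-t)}$ for $t\in[\tau,T]$. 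Since each one-sided bound is dominated by $K[e^{-at}+e^{-a(T-t)}]$ on its subinterval, the claimed estimate then holds on all of $[0,T]$, with the same constants $K,a$ as in (\ref{ineq:flows_on_u_stablemani}).

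The main obstacle is coordinating the intersection delivered by part (iii) with the balls supplied by parts (i) and (ii): the estimate is valid only if the orbit endpoints satisfy $y_0\in B(z_0,\rho_s)$ and $y_1\in B(z_1,\rho_u)$, where $\rho_s,\rho_u$ are the shrinking radii attached to horizon $\tau$. I would resolve this by taking $\rho:=\min\{\rho_s,\rho_u\}$ as the radius claimed in the theorem and applying part (iii) not to $\bar D,\bar E$ but to the shrunk transversal discs $\bar D\cap\overline{B(z_0,\rho)}$ and $\bar E\cap\overline{B(z_1,\rho)}$, so that the resulting $y_0,y_1$ lie in the required balls automatically; since $\rho_s,\rho_u\to0$ as $\tau\to\infty$ by (i)--(ii), this also yields $\rho\to0$ as $T\to\infty$, the last assertion. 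The delicate point is that these discs depend on $T$ through $\tau$, so the threshold $T_0$ returned by part (iii) must be checked to remain compatible as the discs shrink; smaller transversal discs typically demand a longer spreading time, so one has to verify that $\tau$ outpaces this growth. As an alternative I would argue by compactness: applying (iii) to the fixed discs $\bar D,\bar E$ and using that any limit of $y_0$ as $T\to\infty$ must lie on the stable manifold (its forward orbit approaches $0$), whence $y_0\to z_0=\bar D\cap S$ and similarly $y_1\to z_1=\bar E\cap U$, so that the endpoints eventually enter any prescribed ball. Settling this quantifier coordination cleanly is where the real work lies; the patching in the preceding paragraph is then routine.
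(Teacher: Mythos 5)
Your overall architecture is exactly the intended one: the paper itself contains no proof of Theorem~\ref{thm:turnpk_in_dyn} (it defers to \cite{sakamoto:19:cdc}), and the argument there is precisely your decomposition --- apply Proposition~\ref{prop:pre-turnpike}(iii) with $\tau=T/2$ to get a connecting orbit through a single intersection point near the equilibrium, then patch the one-sided estimates of (i) and (ii) at the midpoint. Your use of the group property to get $\varphi(T,y_0)=y_1$ and the substitution $s=t-T$ in part (ii) are both correct.

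The genuine gap is the one you flag yourself and then do not close: making the endpoint $y_0$ produced by (iii) land inside the ball $B(z_0,\rho_s(\tau))$ on which the estimate of (i) is valid. Neither of your two resolutions works as stated. The shrunk-discs version is circular, as you concede: $\rho_s,\rho_u$ depend on $\tau=T/2$, and the threshold $T_0$ of (iii) depends on the discs, so for a given $T$ you cannot guarantee $T>T_0(\rho(T/2))$. The compactness version has two defects. First, the parenthetical justification that any limit point of $y_0$ lies in $S$ (``its forward orbit approaches $0$'') is unsupported --- the decay of the orbit of $y_0$ is exactly what you are trying to prove, and the midpoint $p_T\to 0$ involves $\tau\to\infty$, so it yields no pointwise information about the orbit of a fixed limit point; one needs the $\lambda$-lemma localization (points of $\bar D$ whose forward image stays near the local unstable disc form a neighborhood of $z_0$ shrinking to $z_0$), not a soft limit argument. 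Second, and more decisively, even granting $y_0(T)\to z_0$, this is a race between two quantities tending to zero: you need $|y_0(T)-z_0|\leqslant\rho_s(T/2)$ with $\rho_s(T/2)\to0$, and convergence of $y_0$ at an unspecified rate does not give containment in the shrinking balls. In \cite{sakamoto:19:cdc} this coordination is not achieved by invoking (i)--(iii) as black boxes; the subdiscs $D,E$ of (iii) are constructed via the $\lambda$-lemma as the preimages of the pieces of $\varphi(\tau,\bar D)$, $\varphi(-\tau,\bar E)$ that are $C^1$-close to the local unstable and stable manifolds, and these preimages shrink to $z_0$, $z_1$ in a way quantitatively compatible with the radii in (i)--(ii), so the estimate holds for the constructed $y_0$ by the very localization used to build it. Without that shared construction (or an explicit rate comparison, e.g.\ exponential shrinking of the preimage discs versus the admissible radii in (i)), the final patching step of your proof is unjustified.
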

We refer to \cite[Figure~1]{sakamoto:19:cdc} for the geometric interpretation and proof of Theorem~\ref{thm:turnpk_in_dyn}. 
\subsection{Turnpike analysis for maximum hands-off control}
%
\begin{theorem}\label{thm:mho_tp}
Assume that $A$ has no eigenvalues on the imaginary axis. Suppose also that Assumption~\ref{assm:normal} holds and that $x_0\in \mathscr{L}^{-}(A)$, $x_f\in \mathscr{L}^{+}(A)$. Then, for sufficiently large $T>0$, the maximum hands-off control $u_T(t)$ exists for (\ref{eqn:lti}) and satisfies 
\begin{equation}
    |u_T(t)|+|x_T(t,x_0)|\leqslant K\left[e^{-a t}+e^{-a(T-t)}\right] \ \text{for }t\in[0,T]\label{eqn:tp_ineq_mho}
\end{equation}
where $K>0$ and $a>0$ are constants independent of $T$ and $x_T(t,x_0)$
is the corresponding solution to (\ref{eqn:lti}). Moreover, when $T\to\infty$, the maximum hands-off control tends to two maximum hands-off controls, one of which takes the states from $x_0$ to the origin and the other takes them from the origin to $x_f$. 
\end{theorem}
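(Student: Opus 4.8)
The plan is to reduce the maximum hands-off problem to the analysis of a single hyperbolic Hamiltonian flow and then to invoke the geometric turnpike machinery of Theorem~\ref{thm:turnpk_in_dyn}. First I would record that existence is already in hand: since $A$ has no imaginary eigenvalues, $\mathscr{L}^{0-}(A)=\mathscr{L}^{-}(A)$ and $\mathscr{L}^{0+}(A)=\mathscr{L}^{+}(A)$, so the hypotheses $x_0\in\mathscr{L}^{-}(A)$, $x_f\in\mathscr{L}^{+}(A)$ fall under Theorem~\ref{thm:existence_sparse_opt}; for $T$ large the maximum hands-off control exists and, by normality, coincides with the bang-off-bang $L^1$-optimal control of Problem~\ref{pbm:l1_optimal} with $Q=0$.

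Next I would set up the Pontryagin system. With Hamiltonian $\langle p,Ax+Bu\rangle+|u|$ the costate is autonomous, $\dot p=-A^\top p$, and pointwise minimization gives the selection $u^\ast(p)$ with $u^{\ast,i}=-\mathrm{sgn}\,(B^\top p)_i$ when $|(B^\top p)_i|>1$ and $u^{\ast,i}=0$ when $|(B^\top p)_i|<1$ (Assumption~\ref{assm:normal} makes the singular set have measure zero). Writing $z=(x,p)$ and $f(z)=(Ax+Bu^\ast(p),-A^\top p)$, I would verify Assumption~\ref{assm:hyp}: $f(0)=0$ with $u^\ast\equiv0$ near $0$, so $f$ is linear and hence $C^1$ there, while its linearization $\mathrm{diag}(A,-A^\top)$ is hyperbolic because the spectrum $\{\lambda_i\}\cup\{-\lambda_i\}$ avoids the imaginary axis, with exactly $N/2=n$ stable and $n$ unstable directions; Carath\'eodory solvability comes from the piecewise-linear, bounded right-hand side.

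The geometric heart is to identify the two half-orbits that the optimal process approaches. I would check that $z_0=(x_0,0)$ lies on the stable manifold $S$ and $z_1=(x_f,0)$ on the unstable manifold $U$: along $p\equiv0$ one has $u^\ast\equiv0$ and $\dot x=Ax$, so $\varphi(t,z_0)=(e^{At}x_0,0)\to0$ as $t\to+\infty$ (using $x_0\in\mathscr{L}^{-}(A)$) and $\varphi(t,z_1)=(e^{At}x_f,0)\to0$ as $t\to-\infty$ (using $x_f\in\mathscr{L}^{+}(A)$). These zero-control half-orbits, the decay of $e^{At}x_0$ onto the origin and the backward-in-time decay onto $x_f$, are precisely the two limiting maximum hands-off controls claimed in the theorem. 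With $z_0,z_1$ in hand I would apply Theorem~\ref{thm:turnpk_in_dyn} (via Proposition~\ref{prop:pre-turnpike}) to the flow of $f$ to obtain a connecting orbit obeying $|\varphi(t,y_0)|\le K[e^{-at}+e^{-a(T-t)}]$, whose $x$-part is the state bound in (\ref{eqn:tp_ineq_mho}); the control bound then follows automatically, since on $\mathrm{supp}(u_T)$ one has $|B^\top p(t)|\ge1$, hence $|p(t)|\ge 1/\|B\|$, so $K[e^{-at}+e^{-a(T-t)}]\ge 1/\|B\|$ there and $|u_T(t)|\le1\le K\|B\|[e^{-at}+e^{-a(T-t)}]$ after enlarging the constant, while $u_T=0$ off the support.

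The step I expect to be the true obstacle is the matching with the \emph{exact} boundary data $x(0)=x_0$, $x(T)=x_f$. Because $Q=0$, the stable manifold near the origin is the product $S=\mathscr{L}^{-}(A)\times(\mathscr{L}^{-}(A))^\perp$, so the fibre $\{x=x_0\}$ is not transversal to $S$ at $z_0$ (they share the whole $p$-direction $(\mathscr{L}^{-}(A))^\perp$), and symmetrically for $\{x=x_f\}$ and $U$; hence Theorem~\ref{thm:turnpk_in_dyn} with the fibres as the transversal discs is unavailable, and a naive application only yields $x(0)\approx x_0$, $x(T)\approx x_f$. To close this gap I would not attach the discs to the fibres but instead use that the exact optimal orbit already exists (Step~1) and solves the Pontryagin equations; it then suffices to show that its endpoint costates $p(0)$, $p(T)$ remain bounded uniformly in $T$, so the orbit starts in a fixed neighbourhood of $z_0\in S$ and ends in a fixed neighbourhood of $z_1\in U$, whereupon Proposition~\ref{prop:pre-turnpike}(i)--(ii) supply the one-sided exponential estimates that combine into (\ref{eqn:tp_ineq_mho}). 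Equivalently, one may split $\mathbb{R}^n=\mathscr{L}^{-}(A)\oplus\mathscr{L}^{+}(A)$ into the stable sub-task $x^-:x_0\to0$ and the unstable sub-task $x^+:0\to x_f$, on each of which the relevant invariant manifold is $\{p^\pm=0\}$ and \emph{is} transversal to the corresponding fibre, and argue that for large $T$ the two bang-off-bang actions occupy disjoint boundary layers and hence do not interfere. The uniform costate bound, which is the quantitative expression of the fact that the control effort needed to leave $x_0$ and to reach $x_f$ stays bounded and rests on the controllability in Assumption~\ref{assm:normal}, is the estimate I expect to demand the most care.
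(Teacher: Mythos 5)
Your proposal tracks the paper's proof almost step for step up to the decisive moment: existence via Theorem~\ref{thm:existence_sparse_opt} (noting $\mathscr{L}^{0\pm}(A)=\mathscr{L}^{\pm}(A)$ here), the Pontryagin system (\ref{eqn:ham_sys}) with the dead-zone feedback (\ref{eqn:opt_u_dz}) (your sign convention $-\mathrm{sgn}$ versus the paper's $+\mathbf{dz}$ is just a costate-sign choice), verification of Assumption~\ref{assm:hyp} via linearity near $0$ and hyperbolicity of $\mathrm{diag}(A,-A^\top)$ with Carath\'eodory uniqueness from normality, the identification $z_0=(x_0,0)\in S$, $z_1=(x_f,0)\in U$, and the passage from the $(x,p)$-estimate to (\ref{eqn:tp_ineq_mho}) (your explicit argument that $|B^\top p|\geqslant1$ on $\mathrm{supp}(u_T)$ is a cleaner version of the paper's ``changing $K$ if necessary''). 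Where you depart is precisely where the paper does what you declare unavailable: in its Step~3 the paper applies Theorem~\ref{thm:turnpk_in_dyn} with exactly the fibres $\bar D=\{(x_0,p)\}$, $\bar E=\{(x_f,p)\}$ as the transversal discs, and extracts the exact boundary matching $\varphi(T,(x_0,p_0))=(x_f,p_f)$ from it. Your transversality objection is substantive: since $Q=0$, the vector field is exactly linear near the equilibrium and near the zero-costate orbits, so locally $S=\mathscr{L}^{-}(A)\times\mathscr{L}^{+}(A^\top)$ with $\mathscr{L}^{+}(A^\top)=(\mathscr{L}^{-}(A))^{\perp}$; hence $T_{z_0}\bar D\cap T_{z_0}S=\{0\}\times\mathscr{L}^{+}(A^\top)$ is nontrivial whenever $A$ is not Hurwitz, and by dimension count no $n$-dimensional sub-disc of the fibre can be transversal to $S$ (symmetrically at $z_1$ with $U$). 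So the hypothesis of Theorem~\ref{thm:turnpk_in_dyn} fails for the paper's choice of discs in the typical case where both $\mathscr{L}^{-}(A)$ and $\mathscr{L}^{+}(A)$ are nontrivial --- a genuine soft spot in the paper's own argument that its proof passes over without comment.

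The gap on your side is that your repair is a plan, not a proof. The uniform-in-$T$ bound on $p_T(0)$, $p_T(T)$ is only announced, and even if granted it does not suffice: Proposition~\ref{prop:pre-turnpike}(i)--(ii) require the endpoints to lie in balls $B(z_0,\rho)$, $B(z_1,\rho)$ whose radii shrink as $T\to\infty$, so mere boundedness of the costate never places the optimal orbit inside the region where the one-sided exponential estimates hold. What you actually need is that the distance from $(x_0,p_T(0))$ to $S$ --- equivalently, the $\mathscr{L}^{-}(A^\top)$-component of $p_T(0)$ --- tends to $0$ as $T\to\infty$, and that is essentially the turnpike estimate you are trying to prove, so the argument as sketched is circular. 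Your alternative splitting into the stable sub-task $x_0\to0$ and unstable sub-task $0\to x_f$ likewise asserts, rather than proves, that the two bang-off-bang boundary layers do not interfere. A workable closure would exploit the triangular structure of (\ref{eqn:ham_sys}): $p(t)=e^{-A^\top t}p_0$ is explicit, so $x(T)$ is an explicit continuous function of $p_0$, and a shooting or degree argument in the two spectral components of $p_0$ can yield the exact connection together with the required decay of the stable component --- but neither you nor the paper carries this out. Net assessment: you correctly anticipated and critiqued the paper's actual route, but your substitute leaves the decisive quantitative step unproven, so the proposal does not yet constitute a proof of (\ref{eqn:tp_ineq_mho}).
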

\begin{proof}
(Step 1) It has been shown, in the proof of Theorem~\ref{thm:existence_sparse_opt}, that the maximum hands-off control exists which is also $L^1$ optimal for Problem~\ref{pbm:l1_optimal} with $Q=0$. 
From the necessary condition, there exist $x(t)$, $p(t)$ on $[0,T]$ satisfying 
\begin{subequations}\label{eqn:ham_sys}
\begin{align}
    \dot{x}&=Ax +B\mathrm{\bf dz}(B^\top p)\\
    \dot{p} &=-A^\top p
\end{align}
\end{subequations}
with $x(0)=x_0$ and $x(T)=x_f$, where 
\begin{gather*}
\mathrm{\bf dz}(x) =\bmat{\mathrm{dz}(x_1),\ldots,\mathrm{dz}(x_n)}^\top,\\
\begin{aligned}
\mathrm{dz}(w)&=\begin{cases}-1\ (\text{if }w<-1)\\ 0\ (\text{if }-1<w<1)\\ 1\ (\text{if }1<w ) \end{cases}\\
\mathrm{dz}(w) & \in [-1,0]\ (\text{if }w=-1)\\
\mathrm{dz}(w) &\in [0,1]\ (\text{if }w=1)
\end{aligned}
\end{gather*}
is the dead-zone function which is a set-valued function. The optimal control $u^\ast$ is written with $x(t)$, $p(t)$ in (\ref{eqn:ham_sys}) as
\begin{equation}
u^\ast(t)=\mathrm{\bf dz}(B^\top p(t)).\label{eqn:opt_u_dz}
\end{equation}
This step is a simple restatement of the results in \cite[Chapter~6]{Athans:66:OC}. \\
(Step 2) We show that (\ref{eqn:ham_sys}) satisfies Assumption~\ref{assm:hyp}. 
For sufficiently small $x$, $p$, (\ref{eqn:ham_sys}) is $C^1$ and from the spectral conditions on $A$, it is hyperbolic. We now consider initial value problems for (\ref{eqn:ham_sys}) and prove that it admits unique Carath\'eodory solutions. Take an arbitrary point $(\xi_0,\eta_0)\in\mathbb{R}^{2n}$ as an initial condition for (\ref{eqn:ham_sys}). The normality condition (Assumption~\ref{assm:normal}) means that the set of times on which $B^\top p(t)=\pm1$ holds has Lebesgue measure 0 (\cite[Chapter~6]{Athans:66:OC}). Therefore, one sees that  
\[
\dot x=Ax+B\mathrm{\bf dz}(B^\top \exp(-A^\top t)\eta_0)
\]
satisfies the Carath\'eodory condition for existence and uniqueness for any initial conditions. It is also seen that the existence domain is $\mathbb{R}$. \\
(Step 3) Let $S$ and $U$ be stable and unstable manifolds of (\ref{eqn:ham_sys}) at $(x,p)=(0,0)$, respectively. One sees that for initial point $(\xi_0,0)$ with $\xi_0\in\mathscr{L}^-(A)$, the corresponding solution satisfies $(x(t),p(t))\to0$ as $t\to\infty$ since $p(t)\equiv0$ and therefore, $(\xi_0,0)\in S$ for $\xi_0\in\mathscr{L}^-(A)$. Similarly, we have $(\xi_f,0)\in U$ for $\xi_f\in\mathscr{L}^+(A)$. Now, to apply Theorem~\ref{thm:turnpk_in_dyn}, let $z_0=(x_0,0)$, $z_1=(x_f,0)$ and consider 
\begin{align*}
\bar{D}\cap B(z_0,\rho)&=\{(x_0,p)\,|\,|p|<\rho\}\\
\bar{E}\cap B(z_1,\rho)&=\{(x_f,p)\,|\,|p|<\rho\}. 
\end{align*}
Let $\varphi(t,(x,p))$ be the solution of (\ref{eqn:ham_sys}) starting from $(x,p)$ at $t=0$. The theorem says that for sufficiently large $T$, there exist $p_0$, $p_f$ and $\rho>0$ with $(x_0,p_0)\in \bar{D}\cap B(z_0,\rho)$ and $(x_f,p_f)\in \bar{E}\cap B(z_0,\rho)$ such that $\varphi(T,(x_0,p_0))=(x_f,p_f)$, namely, a solution of a 2-point boundary value problem, and 
\[
|x_T(t)|+|p_T(t)|\leqslant K\left[e^{-a t}+e^{-a(T-t)}\right]\ \text{for }t\in[0,T],
\]
where we have written $(x_T(t),p_T(t))=\varphi(t,(x_0,p_0))$ and $K$, $a$ are independent of $T$. From (\ref{eqn:opt_u_dz}), we obtain (\ref{eqn:tp_ineq_mho}) by properly changing $K$ if necessary. The last statement is shown from the last one in Theorem~\ref{thm:turnpk_in_dyn} noting that $\rho\to0$ implies $y_0\to z_0$ and $y_1\to z_1$. 
\end{proof}
\begin{remark}
\begin{enumerate}
    \item The occurrence of turnpike in Theorem~\ref{thm:mho_tp} depends on the locations of $x_0$ and $x_f$ (subspaces they belong to). This is due to the constraint $|u|\leqslant1$ imposed on the maximum hands-off control problem. 
    \item The spectral condition on $A$ is necessary to apply Theorem~\ref{thm:turnpk_in_dyn} which essentially relies on the hyperbolic nature of dynamical systems. 
\end{enumerate}
\end{remark}
\section{Simulation}\label{sctn:simulation}
In this section, we show simulation to illustrate the properties of maximum hands-off control that have been proved in the previous sections.
We consider the linear system given in \eqref{eqn:lti} with
\begin{equation}
    A = \begin{bmatrix}1&1\\0&-1\end{bmatrix},\quad B = \begin{bmatrix}1\\1\end{bmatrix}.
\end{equation}
We here assume a single input (i.e., $m=1$) for simplicity.
It is easily checked that $(A,B)$ is controllable.
For this system, we have 
\begin{equation}
\begin{split}
    \mathscr{L}^{-}(A) &=\left\{\begin{bmatrix}x_1\\x_2\end{bmatrix}\in\mathbb{R}^2\,\big|\, 2x_1+x_2=0\right\},\\
    \mathscr{L}^{+}(A) &=\left\{\begin{bmatrix}x_1\\x_2\end{bmatrix}\in\mathbb{R}^2\,\big|\, x_2=0\right\}.
\end{split}
\end{equation}
We set the initial and terminal states as follows:
\begin{equation}
    x_0 = \begin{bmatrix}1\\-2\end{bmatrix}\in S,\quad
    x_f = \begin{bmatrix}1\\0\end{bmatrix}\in U.
\end{equation}

For this system, we first compute the maximum hands-off control, the solution to Problem \ref{pbm:mhandsoff},
with $T=2$.
Figure \ref{fig:control} shows the optimal control.
\begin{figure}[t]
    \centering
    \includegraphics[width=\linewidth]{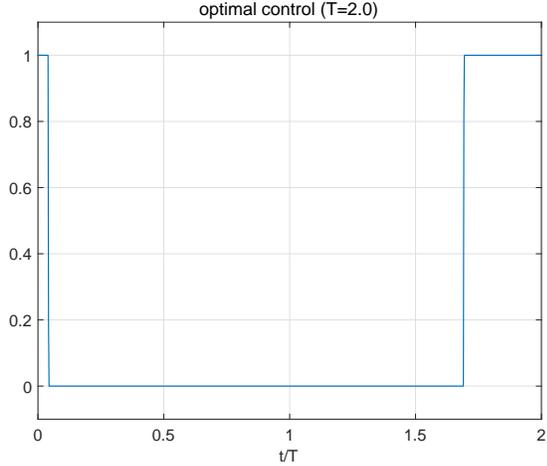}
    \caption{Maximum hands-off control $u(t)$ with $T=2$.}
    \label{fig:control}
\end{figure}
Note that this is obtained by $L^1$ optimization, which is equivalent to the
$L^0$ optimal solution since $A$ is non-singular \cite{Nagahara:16:ieeetac}.
We can see the control is sufficiently sparse, namely, $u(t)=0$ for 
$t\in(0.046,1.69)$. In fact, we have $\|u\|_0\approx0.356\ll 2$.

Next, we show the turnpike property of the maximum hands-off control for this system.
We compute the optimal controls for $T=2,4,8,16,32$
by solving the associated $L^1$ optimal control problems.
Note again that since $A$ is non-singular, the $L^1$ optimal solutions are
also $L^0$ optimal.
Figure \ref{fig:states} shows the state trajectories with the optimal controls.
\begin{figure}[t]
    \centering
    \includegraphics[width=\linewidth]{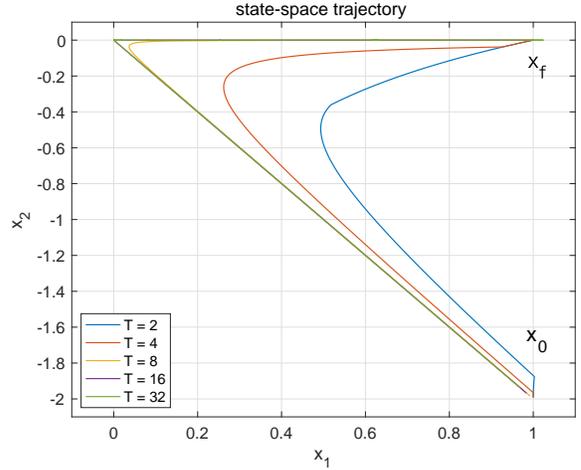}
    \caption{State-space trajectories by the maximum hands-off control with $T=2,4,8,16,32$.}
    \label{fig:states}
\end{figure}
We can see that as $T$ becomes larger, the trajectory from $x_0$ to $x_f$
approaches closer to the origin in the middle of the path.
Also, Figure \ref{fig:state_mag} shows the magnitude $\|x(t)\|_2=\sqrt{x_1(t)^2+x_2(t)^2}$ of the controls.
\begin{figure}[t]
    \includegraphics[width=\linewidth]{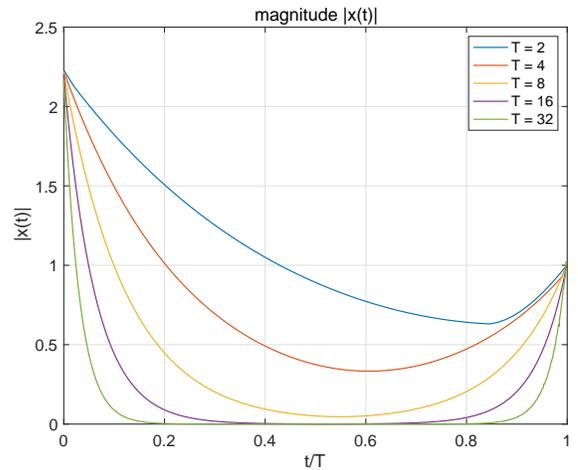}
    \caption{Magnitude $\|x(t)\|_2$ of the states with $T=2,4,8,16,32$.}
    \label{fig:state_mag}
\end{figure}
In this figure, we normalize the time axis as $t/T$ for the comparison of
time duration on which $\|x(t)\|_2\approx 0$.
For larger horizon length $T$, the control stays around the origin for a longer time duration.
These results well illustrate the turnpike property discussed in Section \ref{sctn:HHO_TP}.

\section{Conclusions}
In this paper, we considered the maximum hands-off control problem, which attracts much attention from the viewpoints of solving environmental problems \cite{Nagahara:16:ieeetac}, using the geometric analysis method developed for the theory of turnpike in optimal control. Using the equivalence of maximum hands-off control and $L^1$ optimal control under certain hypotheses, the existence of the maximum hands-off control for linear time-invariant systems is proved. Using the invariant manifold theory, it has been shown that the turnpike phenomenon appears in the maximum hands-off control under the conditions of normality and spectral conditions. The result may be useful from the fact that the occurrence of turnpike often leads to simplification of optimal control design.  
\bibliographystyle{IEEEtran}
\bibliography{biblio/Main.bib,biblio/OptimalControl.bib}
\end{document}